\numberwithin{equation}{section}
\newtheorem{theorem}{Theorem}[section]
\newtheorem{lemma}[theorem]{Lemma}
\newtheorem{definition}[theorem]{Definition}
\renewcommand{\epsilon}{\varepsilon}
\newcommand{\R}{\mathbb{R}}
\newcommand{\dx}{\, \mathrm{d}x}
\title{Connecting SPDE to SGMs}
\author{Junsu Seo}
\date{\today}
\begin{document}

\begin{abstract}
This paper investigates a Stochastic Partial Differential Equation (SPDE) derived from the Fokker-Planck equation associated with Score-based Generative Models. We modify the standard Fokker-Planck equation to better represent practical SGMs and introduce noise to mitigate potential discretization issues. The primary goal is to prove the existence and uniqueness of solutions for this SPDE. This aspect requires careful consideration due to the time-dependent operator and unbounded domain. To overcome these hurdles, we employ a variational approach and introduce a novel space inspired by Ornstein-Uhlenbeck operators. By demonstrating that this space and its subspace satisfy the necessary assumptions, they establish the existence of a solution for the given SPDE.
\end{abstract}

\maketitle

\noindent \textbf{Key words: Stochastic Partial Differential Equations, Score-Based Generative Models, Fokker-Planck Equation, Existence and Uniqueness, Variational Methods, Unbounded Domain, Time-Dependent Operator, Ornstein-Uhlenbeck Operator, Markov Process.} 

\noindent \textbf{MSC 2020 Classification: 60H15 (Primary) 35R60, 60J60 (Secondary)}

\section{Introduction}\label{sec1}
The problem of approximating an unknown probability density function $p(x)$ on $\mathbb{R}^d$ is a fundamental problem in machine learning. Directly estimating $p(x)$ or its gradient, the score function $\nabla_x \log p(x)$, can be computationally intractable, particularly in high dimensions. Score-Based Generative Models (SGMs) offer an approach that leverages stochastic differential equations (SDEs) to address this challenge. SGMs are based on a forward-time SDE:
\begin{align}
	dX_t = f(X_t, t) dt + g(t) dw_t, \quad t \in [0, T], \quad X_0 \sim p_0(x),\tag{1.3}
\end{align}
where $w_t$ is a standard $d$-dimensional Wiener process, $f(X_t, t)$ is a drift term, and $g(t)$ is a scalar diffusion coefficient. This SDE defines a diffusion process that transforms an initial distribution $p_0(x)$ (the target distribution) into a tractable terminal distribution $p_T(x)$ at time $T$, often chosen to be a standard Gaussian. The key insight of SGMs is to learn a time-dependent model $s_\theta(x, t)$ that approximates the score function $\nabla_x \log p_t(x)$ of the intermediate distributions $p_t(x)$ induced by the SDE\cite{DBLP:conf/iclr/0011SKKEP21}. With this learned score function, we can construct a reverse-time SDE, which allows us to generate samples from $p_0(x)$

It is well-known that an SDE has a closely related PDE known as the Fokker-Planck equation [3]. For the SDE in Equation (1.3), the Fokker-Planck equation is given by:
\begin{align}
	\frac{\partial u}{\partial t}=-\nabla\cdot(h(x,t)u)+\frac{g^2(t)}{2}\Delta u\tag{1.4}\label{1.4}
\end{align}
with the initial condition $p_0 := \xi$. Therefore, a computational model using Equation (1.4) can reveal properties of the SGM designed with Equation (1.3). However, there are two main problems. First, the coefficients $h(x,t)$ and $g(t)$ in Equation (1.4) are too general to represent the SDEs typically used in SGMs. To address this, we consider a specific form where $h(x,t) = -xf(t)$, with $f(t) \ge 0$ and $g(t) > 0$ for $t \in [0,T]$. This leads to:
\begin{align}
	\frac{\partial u}{\partial t} = f(t) \nabla \cdot (xu) + \frac{g^2(t)}{2} \Delta u \tag{1.5} \label{1.5}
\end{align}
with the initial condition $p_0 := \xi$. Second, implementing a computational model based on Equation (1.5) requires discretization of space and time. If large variations occur in a short interval, the discretized model can perform poorly. To address this, we introduce a noise term into Equation (1.5), resulting in SPDE (1.1), which was presented earlier in the paper.

This paper concerns the existence and uniqueness of a solution to the Stochastic Partial Differential Equation (SPDE):
\begin{align}
	du = \left( f(t) \nabla \cdot (xu) + \frac{g^2(t)}{2} \Delta u \right) dt + B dW_t. \tag{1.1} \label{1.1}
\end{align}
We will explain the symbols and notation later. Our main theorem specifies the necessary conditions:

\begin{theorem} \label{thm:1.1}
	Let $f$ and $g$ be continuous positive functions on $[0,T]$, i.e., $f,g\in C([0,T],\mathbb{R}_{>0})$, and let $c$ be a positive constant, i.e., $c\in \mathbb{R}_{>0}$. Assume that the following condition holds:
	\begin{equation}
		f(t)-\frac{g^2(t)}{2c}\geq0 \text{\quad for all\quad} t\in[0,T] \tag{1.2}\label{1.2}
	\end{equation}
	For any initial condition $u_{0} \in L^2(\mathbb{R}^d; e^{\frac{1}{2c}|x|^2})$, Equation (\ref{1.1}) has a unique solution $\left\{u_{t}\right\}_{t \in[0, T]}$ in the space $L^2(\mathbb{R}^d; e^{\frac{1}{2c}|x|^2})$.
	Moreover, the solution $\{u_t\}_{t\in[0,T]}$ is a Markov process.
\end{theorem}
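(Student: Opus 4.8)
The plan is to cast \eqref{1.1} as an abstract stochastic evolution equation and apply the variational (Lions--Gelfand triple) framework for monotone coercive operators. The natural weighted space here is $H := L^2(\mathbb{R}^d; e^{\frac{1}{2c}|x|^2})$, and the Ornstein--Uhlenbeck structure of the drift suggests the right Gelfand triple $V \hookrightarrow H \hookrightarrow V^*$, where $V$ is a weighted Sobolev space encoding one derivative in the same Gaussian-type weight. I would first rewrite the spatial operator $A(t)u = f(t)\,\nabla\!\cdot(xu) + \tfrac{g^2(t)}{2}\Delta u$ and compute the energy pairing $\langle A(t)u, u\rangle_H$ against the weight $\rho(x) = e^{\frac{1}{2c}|x|^2}$. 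Integrating by parts and carefully tracking the terms produced by differentiating $\rho$ (which is exactly where the factor $\tfrac{1}{c}x$ enters), I expect the $\nabla\!\cdot(xu)$ and $\Delta u$ contributions to combine so that the cross terms are controlled precisely by the sign condition \eqref{1.2}, namely $f(t) - \tfrac{g^2(t)}{2c}\ge 0$. This is the computation that motivates the choice of weight and the hypothesis.

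With the triple in place, I would verify the standard hypotheses for existence and uniqueness: (i) \emph{hemicontinuity} of $t\mapsto A(t)$ and measurability, which follow from $f,g\in C([0,T],\mathbb{R}_{>0})$; (ii) \emph{monotonicity}, $\langle A(t)u - A(t)v,\, u-v\rangle \le \lambda \norm{u-v}_H^2$, which holds (indeed with $\lambda$ controlled) because $A(t)$ is linear and the energy estimate from the previous step applies to $u-v$; (iii) \emph{coercivity}, $2\langle A(t)u,u\rangle + \norm{B}^2_{L_2} \le -\alpha\norm{u}_V^2 + \lambda\norm{u}_H^2 + g$ for some $\alpha>0$, which is where condition \eqref{1.2} guarantees that the leading diffusion term dominates and supplies the negative $\norm{u}_V^2$ needed to absorb lower-order terms; and (iv) \emph{boundedness/growth}, $\norm{A(t)u}_{V^*}\le C\norm{u}_V$. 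Provided these hold together with the requirement that $B$ be Hilbert--Schmidt from the noise space into $H$ (so that $\int_0^T \norm{B}_{L_2}^2\,dt<\infty$), the general variational existence-and-uniqueness theorem yields a unique $H$-valued solution $\{u_t\}_{t\in[0,T]}$ with the stated continuity and integrability.

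I anticipate the main obstacle to be the \emph{coercivity} estimate on the unbounded domain, since the operator $A(t)$ is not uniformly elliptic in the ordinary sense and the weight $\rho$ grows super-polynomially. The delicate point is to show that the boundary-type and first-order terms arising from the weighted integration by parts do not destroy the negativity of the principal part: one must confirm that the weighted $\norm{u}_V^2$ genuinely appears with a strictly positive coefficient after all cancellations, which hinges on \eqref{1.2} being not merely nonnegative but sufficient to close the estimate (and on the precise definition of the subspace $V$ announced in the abstract). A secondary technicality is justifying the integrations by parts for elements of $V$ rather than smooth compactly supported functions, which I would handle by a density argument, approximating $u\in V$ by $C_c^\infty(\mathbb{R}^d)$ functions and passing to the limit using the boundedness estimate (iv).

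Finally, for the \emph{Markov property}, I would appeal to the fact that the solution is constructed as a strong solution of a well-posed stochastic evolution equation driven by an additive Wiener increment $B\,dW_t$ with a time-dependent but state-independent generator. By uniqueness, the solution started from $u_s$ at time $s$ and evaluated at $t>s$ depends measurably on $u_s$ and on the increments of $W$ over $[s,t]$, which are independent of $\mathcal{F}_s$; the standard flow/cocycle argument then gives the Markov property of $\{u_t\}_{t\in[0,T]}$ in $H$.
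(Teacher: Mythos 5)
Your proposal follows essentially the same route as the paper: the Krylov--Rozovskii variational framework over the Gelfand triple built on $H = L^2(\mathbb{R}^d; e^{\frac{1}{2c}|x|^2})$ with an Ornstein--Uhlenbeck-type weighted Sobolev space $V$, verification of hemicontinuity, monotonicity, coercivity, and growth via weighted integration by parts (extended from $C_c^\infty$ by density), with condition \eqref{1.2} doing exactly the work you identify in closing the coercivity estimate, and the Markov property coming from the abstract well-posedness theory. You also correctly anticipate the paper's main technical point, namely that the definition of $V$ (which the paper augments with the term $\|\nabla(vw)w^{-1}\|^2 w$) must be tuned so the coercivity estimate survives the weighted integrations by parts.
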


The importance of having a solution for Equation (\ref{1.1}) lies in guaranteeing that its behavior is similar to that of Equation (\ref{1.5}) under appropriate assumptions on $B$. Many similar types of SPDEs exist, such as the stochastic heat equation, stochastic wave equation, and stochastic transport equation. The existence of solutions for each of these SPDEs is well-studied \cite{Swanson2007, flandoli2010well, dalang1998stochastic}. However, Equation (\ref{1.1}) presents two distinct challenges compared to the aforementioned examples. First, the presence of a time-dependent operator makes it difficult to apply the semigroup method to obtain a solution. Second, the unbounded domain complicates the process of bounding the coefficients.

We achieve this by using the variational approach proposed by Krylov and Rozovskii \cite{krylov2007stochastic}. In their 1981 paper, Krylov and Rozovskii established a framework for time-dependent and unbounded domains. We introduce a new space, derived from the ideas of Ornstein-Uhlenbeck operators, that has not appeared in Krylov and Rozovskii's paper. Using this space and a suitable subspace, we show that these two spaces satisfy all the assumptions of Krylov and Rozovskii's framework, implying that Equation (\ref{1.1}) admits a solution.
\section{Preliminaries and Problem Statement}
\subsection{Notation} Consider the weight function $w(x) = e^{\frac{1}{2c}\|x\|^2}$ for $x \in \mathbb{R}^d$ and $c > 0$. We define the Hilbert space $H^1_w(\mathbb{R}^d; w^{-1})$ as the completion of $C_c^\infty(\mathbb{R}^d)$ (smooth functions with compact support) with respect to the norm
$$
\|v\|_{H^1_w(\mathbb{R}^d; w^{-1})} := \left( \int_{\mathbb{R}^d} |v|^2 w \, dx + \int_{\mathbb{R}^d} \|\nabla v\|^2 w \, dx + \int_{\mathbb{R}^d} \|\nabla(vw)w^{-1}\|^2 w \, dx \right)^{1/2}.
$$
The inner product is given by
$$
\langle v,u \rangle_{H^1_w(\mathbb{R}^d; w^{-1})} := \int_{\mathbb{R}^d} vu w \, dx + \int_{\mathbb{R}^d} \nabla v \cdot \nabla u w \, dx + \int_{\mathbb{R}^d} \nabla(vw)w^{-1} \cdot \nabla(uw)w^{-1} w \, dx.
$$
We can view this space as a modification of the standard weighted Sobolev space $H^1_w(\mathbb{R}^d)$, with the additional term involving $\nabla(vw)w^{-1}$ introducing a control on the gradient of the weighted function $vw$. This modification is essential for the finiteness results presented in Lemma 3.2. We note that $\|\cdot\|$ denotes the Euclidean norm on $\mathbb{R}^d$, and the integrals are with respect to Lebesgue measure.

\subsection{Gelfand Triple}\label{sec2}
Let $(V, \langle\cdot,\cdot\rangle_V)$ and $(H,\langle\cdot,\cdot\rangle_H)$ be complete separable Hilbert spaces. We assume that $V$ is a dense subset of $H$. Let $V^*$ and $H^*$ denote the dual spaces of $V$ and $H$, respectively. Then we obtain the following Gelfand triple:
\begin{align*}
	V \subset H \cong H^* \subset V^*
\end{align*}
where the isomorphism comes from the Riesz representation theorem. We denote the duality pairing by ${}_{V^*}{\langle \cdot,\cdot\rangle}_V$ between the spaces $V^*$ and $V$. Then it is easy to see that
\begin{align*}
	{}_{V^*}{\langle h,v\rangle}_V = \langle h,v\rangle_H
\end{align*}
for all $h \in H$ and $v \in V$.

\subsection{$Q$-Wiener Process} Let $W(t)$ be a $Q$-Wiener process defined on a complete filtered probability space $(\Omega,\mathscr{F},\mathscr{F}_t,P)$. Here, the operator $Q$ is a symmetric trace-class operator defined on another complete separable Hilbert space $(U,\langle\cdot,\cdot\rangle_U)$. We denote the space of all Hilbert-Schmidt operators from $UQ^{\frac{1}{2}}$ to $H$ as $L_Q(U,H)$. We denote the norm of this space as $\|\cdot\|_{L_Q(U,H)}$  and the inner product of this space as   $\langle\cdot,\cdot\rangle_{L_Q(U,H)}$.
\subsection{Solution of the SPDE}\begin{definition}\cite{liu2010spde}
	Let $V$ and $H$ be Hilbert spaces such that $V\subset H\equiv H^*\subset V^*$ is a Gelfand triple. A continuous $H$-valued $\mathscr{F}_t$-adapted process $\{u_t\}_{t\in[0,T]}$ is called a solution of Equation (3.1), if for its $dt \times P$-equivalence class $\bar{u}$ we have
	$$
	\bar{u} \in L^{2}([0, T] \times \Omega, \mathrm{d} t \otimes P ; V) \cap L^{2}([0, T] \times \Omega, \mathrm{d} t \otimes P ; H)
	$$
	and $P$-a.s.
	$$
	u_t = u_0 + \int_{0}^{t} A(s, \bar{u}(s)) ds + \int_{0}^{t} B(s, \bar{u}(s)) dW(s), \quad t \in [0,T],
	$$
	here $\bar{u}$ is understood as a progressively measurable $dt \times P$-version of $\bar{u}$.
	
\end{definition} 
\subsection{Problem Statement} Now, we state Equation (\ref{1.4}) in more precise version as follow:
\begin{equation}
	\left\{
	\begin{array}{l}
		 du=\left(f(t)\nabla\cdot(xu)+\frac{g^2(t)}{2}\Delta u\right)dt +B(t) dW \text{\quad\quad on } [0,T]\times H \\
		u_0 = \xi \text{  \quad\quad\quad\quad\quad\quad\quad\quad\quad\quad\quad\quad\quad\quad\quad\quad\quad\quad on } H,
	\end{array}
	\right.
\end{equation}
Here, we define the Laplace operator $\Delta$ and the divergence operator $\nabla \cdot$ in the distributional sense. Let $A(t, u_t) = f(t)\nabla \cdot (xu) + \frac{g^2(t)}{2}\Delta u$. We assume that the operator $A$ has domain $[0, T] \times V$ and codomain $V^*$. Similarly, we assume that the operator $B$ has domain $[0, T] \times V$ and codomain $L_2(U, H)$. Furthermore, we assume that $\|B(t)\|_{L_Q(U, H)} \leq h(t)$ for all $t \in [0, T]$ and all $u_t \in V$, where $h(t)$ is a positive real-valued function on $[0, T]$.

Now we state the main result again.
\\
\begin{theorem} \label{thm:1.1}
	Let $f,g\in C([0,T],\mathbb{R}_{>0})$ and $c\in \mathbb{R}_{>0}$ satisfy the following condition:
	\begin{equation}
		f(t)-\frac{g^2(t)}{2c}\geq0 \text{\quad for all\quad} t\in[0,T] \label{eq:2.2}
	\end{equation}
	For any $u_{0} \in  L^2(\mathbb{R}^d; e^{\frac{1}{2c}\|x\|^2})$, Equation (\ref{eq:2.2}) has a unique solution $\left\{u_{t}\right\}_{t \in[0, T]}$ on $L^2(\mathbb{R}^d; e^{\frac{1}{2c}\|x\|^2})$. Moreover, the solution is a Markov process.
\end{theorem}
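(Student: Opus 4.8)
The plan is to realize (1.1) as an abstract stochastic evolution equation on the Gelfand triple $V\subset H\cong H^*\subset V^*$, with $H=L^2(\mathbb{R}^d;w)$ for the weight $w=e^{\|x\|^2/(2c)}$ and $V=H^1_w(\mathbb{R}^d;w^{-1})$ the space introduced in the notation section, and then to verify that the coefficients $A(t,u)=f(t)\nabla\cdot(xu)+\frac{g^2(t)}{2}\Delta u$ and $B(t)$ satisfy the four standing hypotheses of the Krylov--Rozovskii variational theory: hemicontinuity, weak monotonicity, coercivity, and boundedness of $A$, together with the prescribed growth of $B$. All estimates would first be carried out for $u\in C_c^\infty(\mathbb{R}^d)$, where the integrations by parts are justified, and then extended to $V$ by density. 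Once the hypotheses are in place, the abstract existence--uniqueness theorem produces the unique continuous $H$-valued $\mathscr{F}_t$-adapted solution.

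The decisive step, and the one I expect to be hardest, is coercivity. Since $A(t,\cdot)$ is linear it suffices to evaluate the pairing on the diagonal; writing $\langle u,v\rangle_H=\int uv\,w\,dx$ and using $\nabla w=(x/c)w$, two integrations by parts give
\begin{align*}
{}_{V^*}\langle A(t,u),u\rangle_V
&=\Bigl(\tfrac{d}{2}f(t)+\tfrac{d}{4c}g^2(t)\Bigr)\|u\|_H^2
 -\tfrac{g^2(t)}{2}\int_{\mathbb{R}^d}\|\nabla u\|^2\,w\,dx \\
&\quad -\tfrac{1}{2c}\Bigl(f(t)-\tfrac{g^2(t)}{2c}\Bigr)\int_{\mathbb{R}^d}|u|^2\,\|x\|^2\,w\,dx .
\end{align*}
Assumption (1.2) is precisely what renders the last integral nonpositive, so it may be dropped in an upper bound --- without it the unbounded weight $\|x\|^2$ would spoil the estimate. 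To turn the surviving term $-\frac{g^2(t)}{2}\int\|\nabla u\|^2 w\,dx$ into control of the $V$-norm, I would exploit the integration-by-parts identity $\int\|\nabla(uw)w^{-1}\|^2 w\,dx=\int\|\nabla u\|^2 w\,dx-\frac{d}{c}\|u\|_H^2$, which re-expresses the third term of $\|\cdot\|_V$ and yields $\|u\|_V^2=(1-\frac{d}{c})\|u\|_H^2+2\int\|\nabla u\|^2 w\,dx$. Together with the strict positivity and continuity of $g$ on $[0,T]$ (so that $g^2(t)\ge g_{\min}^2>0$), this gives the coercivity inequality $2{}_{V^*}\langle A(t,u),u\rangle_V+\|B(t)\|_{L_Q}^2\le-\alpha\|u\|_V^2+\beta\|u\|_H^2+h(t)^2$ with $\alpha>0$ uniform in $t$.

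The three remaining hypotheses are lighter but still hinge on the structure of $V$. Hemicontinuity holds because $A(t,\cdot)$ is linear and $f,g^2$ are continuous in $t$, so $s\mapsto{}_{V^*}\langle A(t,u+sv),z\rangle_V$ is affine, hence continuous. Weak monotonicity is immediate from linearity, $A(t,u)-A(t,v)=A(t,u-v)$: the same computation, after dropping the nonpositive terms, gives $2{}_{V^*}\langle A(t,u)-A(t,v),u-v\rangle_V\le C\|u-v\|_H^2$, and since $B=B(t)$ does not depend on the solution the Hilbert--Schmidt difference term vanishes. Boundedness, $\|A(t,u)\|_{V^*}\le C\|u\|_V$, is the estimate for which the extra term in $\|\cdot\|_V$ was built: pairing $A(t,u)$ with $v\in V$ and integrating by parts gives $-f(t)\int xu\cdot(\nabla(vw)w^{-1})\,w\,dx-\frac{g^2(t)}{2}\int\nabla u\cdot(\nabla(vw)w^{-1})\,w\,dx$, and Cauchy--Schwarz bounds the two terms by $f(t)(\int\|x\|^2|u|^2 w\,dx)^{1/2}+\frac{g^2(t)}{2}(\int\|\nabla u\|^2 w\,dx)^{1/2}$ times $(\int\|\nabla(vw)w^{-1}\|^2 w\,dx)^{1/2}$. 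The last factor is part of $\|v\|_V$, while the weighted second moment of $u$ is controlled by $\|u\|_V$ through $\frac{1}{c^2}\int\|x\|^2|u|^2 w\,dx\le 2\int\|\nabla(uw)w^{-1}\|^2 w\,dx+2\int\|\nabla u\|^2 w\,dx\le 2\|u\|_V^2$ --- exactly the finiteness content of Lemma 3.2. Finally, $\|B(t)\|_{L_Q}\le h(t)$ furnishes the required bound on $B$.

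With all four hypotheses verified, the Krylov--Rozovskii theorem delivers a unique $H$-valued solution $\{u_t\}_{t\in[0,T]}$ for every $u_0\in H=L^2(\mathbb{R}^d;w)$, which is the existence and uniqueness assertion. For the Markov property I would invoke the standard consequence of pathwise uniqueness for variational SPDEs with deterministic coefficients: because the generator $A(t,\cdot)$ is deterministic and $B=B(t)$ is independent of the state, the solution started at time $s$ depends on the noise only through its increments on $[s,t]$, and uniqueness then forces the two-parameter flow to satisfy Chapman--Kolmogorov, giving the Markov property. The genuine obstacle throughout is the interplay between the coercivity computation and the boundedness of $A$: both require that the bespoke space $V$ simultaneously control the weighted gradient and the weighted second moment $\int|u|^2\|x\|^2 w\,dx$, and that condition (1.2) neutralize that otherwise uncontrollable term.
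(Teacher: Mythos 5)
Your proposal is correct and follows essentially the same route as the paper: the same Gelfand triple built on $H=L^2(\mathbb{R}^d;w)$ with the bespoke space $V=H^1_w(\mathbb{R}^d;w^{-1})$, the Krylov--Rozovskii variational framework, and verification of the same four hypotheses, where your identity $\int\|\nabla(uw)w^{-1}\|^2w\,dx=\int\|\nabla u\|^2w\,dx-\tfrac{d}{c}\|u\|_H^2$ is exactly the paper's Lemma 3.3 and your diagonal computation isolates the same $\|x\|^2$-weighted term that condition (1.2) renders nonpositive (the paper packages this as Lemmas 3.1--3.4). The differences are organizational rather than substantive --- you expand ${}_{V^*}\langle A(t,u),u\rangle_V$ directly instead of splitting off the Ornstein--Uhlenbeck part $\tfrac{g^2}{2}\nabla\cdot(\nabla(uw)w^{-1})$ --- and, if anything, your explicit on-diagonal identity and your Cauchy--Schwarz treatment of the boundedness condition are cleaner than the paper's corresponding steps.
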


We use the variational approach from Krylov and Rozovskii \cite{krylov2007stochastic} to establish this result.

\section{Proof of the Main Theorem}

To prove Theorem 2.3, we will explain the necessary steps one by one.
\begin{lemma}\label{lem:div_is_zero}Let $V$ and $c$ be as in Condition (\ref{1.2}). Let $v\in V$. Then the following holds:
\begin{align}
	\int\nabla\cdot (v^2  \nabla e^{\frac{1}{2c}\|x\|^2})dx=0.
\end{align}
\end{lemma}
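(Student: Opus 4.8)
The plan is to prove the identity first for test functions $v \in C_c^\infty(\mathbb{R}^d)$, where it is immediate, and then to extend it to all of $V$ by density. Throughout write $w = e^{\frac{1}{2c}\|x\|^2}$ and note the pointwise identities $\nabla w = \frac{1}{c} x w$ and $\Delta w = \left(\frac{d}{c} + \frac{\|x\|^2}{c^2}\right)w$. Expanding by the product rule,
$$
\nabla\cdot (v^2 \nabla w) = \nabla(v^2)\cdot \nabla w + v^2 \Delta w = \frac{2}{c}\, v\, \nabla v \cdot x\, w + v^2\left(\frac{d}{c} + \frac{\|x\|^2}{c^2}\right) w ,
$$
so the integral in question is the quadratic functional $\Phi(v) := \int_{\mathbb{R}^d}\left[\frac{2}{c}v\,\nabla v\cdot x + v^2\left(\frac{d}{c}+\frac{\|x\|^2}{c^2}\right)\right] w\dx$. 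For $v\in C_c^\infty(\mathbb{R}^d)$ the vector field $v^2\nabla w$ is smooth with compact support, hence $\Phi(v)=\int \nabla\cdot(v^2\nabla w)\dx = 0$ by the divergence theorem, with no boundary contribution.

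The first real task is to check that $\Phi$ is finite on all of $V$ and then to rewrite it in a form adapted to the $V$-norm. Using the pointwise algebraic identity
$$
\|\nabla(vw)w^{-1}\|^2 = \left\|\nabla v + \tfrac{1}{c} v x\right\|^2 = \|\nabla v\|^2 + \frac{2}{c}\, v\,\nabla v\cdot x + \frac{1}{c^2}\, v^2\|x\|^2 ,
$$
valid wherever the weak gradient $\nabla v$ exists, I integrate against $w\dx$ to get $\int \frac{2}{c}v\,\nabla v\cdot x\, w\dx + \int\frac{1}{c^2}v^2\|x\|^2 w\dx = \int\|\nabla(vw)w^{-1}\|^2 w\dx - \int \|\nabla v\|^2 w\dx$. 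The right-hand side is finite for every $v\in V$ by definition of the $V$-norm; in particular the third term of that norm, together with $\int\|\nabla v\|^2 w\dx<\infty$, controls $\int v^2\|x\|^2 w\dx$ (via the triangle inequality in $L^2(\mathbb{R}^d;w)$), after which the cross term is finite by Cauchy--Schwarz. Substituting this relation into the definition of $\Phi$ yields the clean representation
$$
\Phi(v) = \int \|\nabla(vw)w^{-1}\|^2 w\dx - \int\|\nabla v\|^2 w\dx + \frac{d}{c}\int v^2 w\dx ,
$$
valid for all $v\in V$.

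This representation is the crux. Each of the three integrals on the right is of the form $\|Lv\|_{L^2(\mathbb{R}^d;w)}^2$ for a linear map $L$ bounded from $V$ into $L^2(\mathbb{R}^d;w)$ (namely $v\mapsto \nabla(vw)w^{-1}$, $v\mapsto \nabla v$, and $v\mapsto v$), and each such map satisfies $\|Lv\|_{L^2(\mathbb{R}^d;w)}\leq \|v\|_V$. Hence $v\mapsto\|Lv\|^2$ is continuous on $V$, and so is $\Phi$. Choosing $v_n\in C_c^\infty(\mathbb{R}^d)$ with $v_n\to v$ in $V$ and using $\Phi(v_n)=0$, continuity forces $\Phi(v)=0$, which is the claim. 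I expect the main obstacle to be exactly the point where the unbounded domain forbids a naive divergence theorem: one must instead secure enough integrability at infinity, and this is supplied precisely by the extra third term in the $V$-norm, which is what makes $\int v^2\|x\|^2 w\dx$ finite. A secondary point requiring care is that $\Phi$ is quadratic rather than linear, so the density step relies on the genuine continuity established above rather than on boundedness of a linear functional.
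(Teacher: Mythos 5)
Your proposal is correct and follows the same overall strategy as the paper's proof: establish the identity for $v\in C_c^\infty(\R^d)$ via the divergence theorem, then extend to all of $V$ by density and continuity of the functional. The difference is one of substance rather than structure: the paper simply asserts that $I(v)=\int\nabla\cdot(v^2\nabla w)\dx$ ``can be shown to be continuous on $V$,'' whereas you actually prove it, and that continuity is the entire analytic content of the lemma. Your representation
\[
\Phi(v) \;=\; \int \big\|\nabla(vw)w^{-1}\big\|^2 w \dx \;-\; \int \|\nabla v\|^2 w \dx \;+\; \frac{d}{c}\int v^2 w \dx
\]
writes the functional as a combination of squares of precisely the three linear maps ($v\mapsto v$, $v\mapsto\nabla v$, $v\mapsto\nabla(vw)w^{-1}$) whose $L^2(\R^d;w)$-norms constitute the $V$-norm, so each is bounded by $\|\cdot\|_V$ and continuity of the quadratic functional is immediate. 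This has two further benefits the paper's version lacks: it gives a rigorous meaning to the expression $\int\nabla\cdot(v^2\nabla w)\dx$ for a general element of the completion $V$ (where $\nabla\cdot(v^2\nabla w)$ is a priori only a distribution), and it makes explicit why the nonstandard third term in the $V$-norm is indispensable --- it is exactly what controls $\int v^2\|x\|^2 w\dx$ at infinity, without which the divergence-theorem argument could not be salvaged by density. Your version is thus a strictly more complete rendering of the paper's argument.
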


\begin{proof}
	First, for any test function $\phi \in C^\infty_c(\R^d)$, the vector field $\phi^2 \nabla w$ is smooth and has compact support. By the Divergence Theorem, we have $\int_{\R^d} \nabla \cdot (\phi^2 \nabla w) \dx = 0$.
	
	The functional $I(v) := \int_{\R^d} \nabla \cdot (v^2 \nabla w) \dx$ can be shown to be continuous on $V$. Since $C^\infty_c(\R^d)$ is dense in $V$, for any $v \in V$, there exists a sequence $\{\phi_n\} \subset C^\infty_c(\R^d)$ such that $\phi_n \to v$ in $V$. By the continuity of $I$,
	$$ I(v) = \lim_{n\to\infty} I(\phi_n) = \lim_{n\to\infty} 0 = 0. $$
	This completes the proof.
\end{proof}

\begin{lemma}begin{lemma}\label{lem:ibp_for_ou} Let $V$ and $c$ be as in Condition (\ref{1.2}). Let $v\in V$. Then the following holds:
	\begin{align}
		\int v \nabla\cdot\left(\nabla(ve^{\frac{1}{2c}\|x\|^2})e^{-\frac{1}{2c}\|x\|^2}\right)e^{\frac{1}{2c}\|x\|^2}dx=-\int \|\nabla(ve^{\frac{1}{2c}\|x\|^2})e^{-\frac{1}{2c}\|x\|^2}\|^2e^{\frac{1}{2c}\|x\|^2}dx
	\end{align}
\end{lemma}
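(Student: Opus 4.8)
The plan is to reduce the identity to a single integration by parts and then pass from smooth functions to $V$ by density, exactly as in Lemma~\ref{lem:div_is_zero}. Write $w=e^{\frac{1}{2c}\norm{x}^2}$ and introduce the vector field $F_v:=\grad(vw)w^{-1}$, which by the definition of the $V$-norm satisfies $\int_{\R^d}\norm{F_v}^2 w\dx<\infty$. The left-hand side is
\begin{align*}
	\int_{\R^d} v\,\divg(F_v)\,w\dx=\int_{\R^d}(vw)\,\divg(F_v)\dx,
\end{align*}
so moving the divergence onto $vw$ (with vanishing boundary flux) yields $-\int_{\R^d}\grad(vw)\cdot F_v\dx=-\int_{\R^d}\norm{\grad(vw)}^2 w^{-1}\dx$, and since $\norm{F_v}^2 w=\norm{\grad(vw)}^2 w^{-1}$ this is precisely the right-hand side. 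Thus the entire content of the lemma is the justification of this integration by parts.

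First I would establish the identity for $\phi\in C_c^\infty(\R^d)$. Here $\grad(\phi w)$, and hence $F_\phi$, has compact support, so the Divergence Theorem gives $\int_{\R^d}(\phi w)\divg(F_\phi)\dx=-\int_{\R^d}\grad(\phi w)\cdot F_\phi\dx$ with no boundary term, which is the claimed equality for $\phi$. To pass to $v\in V$, observe that the right-hand functional $R(v):=-\int_{\R^d}\norm{F_v}^2 w\dx$ is minus the third term of $\norm{v}_V^2$ and hence continuous under $V$-convergence. For the left-hand side I would interpret the Ornstein--Uhlenbeck operator $A_0 v:=\divg(\grad(vw)w^{-1})$ as an element of $V^*$ via the bounded bilinear form ${}_{V^*}\langle A_0 v,u\rangle_V:=-\int_{\R^d}F_v\cdot F_u\,w\dx$, boundedness being immediate from Cauchy--Schwarz and the definition of $\norm{\cdot}_V$; setting $L(v):={}_{V^*}\langle A_0 v,v\rangle_V$, the smooth computation shows $L(\phi)=R(\phi)$. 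Since both $L$ and $R$ are continuous on $V$ and $C_c^\infty(\R^d)$ is dense in $V$, the equality propagates to $L(v)=R(v)$ for every $v\in V$.

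The main obstacle is precisely the meaning of the left-hand side for general $v\in V$: the $V$-norm controls $F_v$ in $L^2(\R^d;w)$ but not its divergence, so $\divg(F_v)=A_0 v$ exists only as a distribution and the displayed integral is not a classical one. The crux of the argument is therefore to endow the left-hand side with a rigorous interpretation---through the $V^*$ duality pairing---that is simultaneously continuous on $V$ and consistent with the classical expression on the dense subspace $C_c^\infty(\R^d)$; granting this, the density step closes the proof. I would also remark that this identity is exactly the dissipativity of the Ornstein--Uhlenbeck part of $A$, and that it will later feed into the coercivity estimate required by the Krylov--Rozovskii framework.
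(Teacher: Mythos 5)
Your proposal is correct and follows essentially the same route as the paper's own proof: establish the identity for $C_c^\infty(\R^d)$ functions by integration by parts (with boundary terms vanishing by compact support), then extend to all of $V$ by density, using that both sides—with the left-hand side read through the duality pairing—are continuous under $V$-convergence. Your explicit remark that $\divg(\grad(vw)w^{-1})$ exists only distributionally for general $v\in V$, so the left-hand integral must be interpreted as a $V^*$--$V$ pairing, makes precise a point the paper passes over by simply asserting that ``both sides define continuous bilinear forms on $V\times V$.''
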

\begin{proof}
	The identity is first established for $u, v \in C_c^\infty(\R^d)$. The left-hand side can be rewritten using integration by parts. Let $w(x) = e^{\frac{1}{2c}\|x\|^2}$.
	\begin{align*}
		\int u \left( \nabla\cdot(\nabla(vw)w^{-1}) \right) w \dx &= \int (uw) \nabla\cdot(\nabla(vw)w^{-1}) \dx \\
		&= - \int \nabla(uw) \cdot (\nabla(vw)w^{-1}) \dx \\
		&= - \int (\nabla(uw)w^{-1}) \cdot (\nabla(vw)w^{-1}) w \dx.
	\end{align*}
	In the second step, we used integration by parts, and the boundary terms vanish due to the compact support. Both sides of the identity define continuous bilinear forms on $V \times V$. Since $C_c^\infty(\R^d)$ is dense in $V$, the identity extends to all $u, v \in V$.
\end{proof}
\begin{lemma}\label{lem:expand_grad_norm} Let $V$ and $c$ be as in Condition (\ref{1.2}). Let $v\in V$. Then the following holds:
\begin{align}
	\int\|\nabla(ve^{\frac{1}{2c}\|x\|^2})\|^2e^{-\frac{1}{2c}\|x\|^2}dx=\int\left(\|\nabla v\|^2-\frac{d}{c}\|v\|^2\right)e^{\frac{1}{2c}\|x\|^2}dx
\end{align}
\end{lemma}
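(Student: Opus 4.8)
The plan is to prove the identity first for $v \in C_c^\infty(\R^d)$ by a direct computation combined with a single integration by parts, and then to pass to general $v \in V$ by density. Writing $w(x) = e^{\frac{1}{2c}\|x\|^2}$, the essential computation starts from $\nabla w = \frac{1}{c} x\, w$, which gives
$$
\nabla(vw) = (\nabla v)w + v\,\nabla w = w\Big(\nabla v + \tfrac{1}{c} x\, v\Big).
$$
Hence $\|\nabla(vw)\|^2 w^{-1} = w\,\big\|\nabla v + \tfrac{1}{c} x v\big\|^2$, and expanding the square shows that the left-hand integrand equals
$$
w\Big(\|\nabla v\|^2 + \tfrac{2}{c}\, v\,(x\cdot\nabla v) + \tfrac{1}{c^2}\|x\|^2 v^2\Big).
$$

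The next step is to dispose of the cross term. I would rewrite $\tfrac{2}{c} v\,(x\cdot\nabla v) = \tfrac{1}{c}\, x\cdot\nabla(v^2)$ and integrate by parts against $w$, using $\nabla\cdot(xw) = \big(d + \tfrac{1}{c}\|x\|^2\big)w$, with boundary terms vanishing because $v$ has compact support. This yields
$$
\frac{1}{c}\int w\, x\cdot\nabla(v^2)\dx = -\frac{d}{c}\int v^2 w\dx - \frac{1}{c^2}\int \|x\|^2 v^2 w\dx.
$$
Adding this to the leftover term $\tfrac{1}{c^2}\int \|x\|^2 v^2 w\dx$, the two $\|x\|^2$-weighted contributions cancel exactly, leaving $\int \|\nabla v\|^2 w\dx - \frac{d}{c}\int v^2 w\dx$, which is precisely the claimed right-hand side.

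Finally, I would extend the identity from $C_c^\infty(\R^d)$ to all of $V$ by density, exactly as in Lemmas~\ref{lem:div_is_zero} and~\ref{lem:ibp_for_ou}. The key observation is that the left-hand side equals $\int \|\nabla(vw)w^{-1}\|^2 w\dx$, which is exactly the third term in $\|v\|_{H^1_w(\R^d;\,w^{-1})}^2$, while the two integrals on the right are controlled by the first two terms of the same norm; thus both sides are continuous quadratic forms on $V$, and the identity passes to the limit along a sequence $\phi_n \to v$ in $V$. I expect the main obstacle to be precisely this point: for a general $v \in V$ the individual integral $\int \|x\|^2 v^2 w\dx$ need not be finite, so the cancellation of the $\|x\|^2$-terms cannot be carried out directly at the level of $V$. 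This is why the computation must be performed on $C_c^\infty(\R^d)$, where every term is separately finite and the integration by parts is legitimate, and then transferred by the continuity of the \emph{combined} forms rather than of the individual pieces — which is exactly the role played by the $\nabla(vw)w^{-1}$ term in the definition of the norm on $V$.
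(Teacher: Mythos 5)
Your proof is correct and takes essentially the same approach as the paper: the identical expansion of $\nabla(vw)$ via $\nabla w = \tfrac{x}{c}w$, the same integration by parts on the cross term using $\nabla\cdot(xw) = \bigl(d+\tfrac{1}{c}\|x\|^2\bigr)w$, and the same cancellation of the $\|x\|^2$-weighted integrals. The only difference is that you spell out the density extension from $C_c^\infty(\mathbb{R}^d)$ to $V$ — together with the pertinent observation that $\int \|x\|^2 v^2 w \, dx$ need not be finite for general $v\in V$, so the cancellation must happen before passing to the limit — a step the paper performs explicitly in Lemmas~\ref{lem:div_is_zero} and~\ref{lem:ibp_for_ou} but leaves implicit here.
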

\begin{proof}
	We expand the term inside the integral on the left-hand side. Using $\nabla w = \frac{x}{c}w$, we have $\nabla(vw) = (\nabla v)w + v(\nabla w) = (\nabla v + \frac{x}{c}v)w$.
	\begin{align*}
		\int_{\R^d} \|\nabla(vw)\|^2 w^{-1} \dx &= \int_{\R^d} \left\| \nabla v + \frac{x}{c}v \right\|^2 w \dx \\
		&= \int_{\R^d} \left( \|\nabla v\|^2 + \frac{2}{c} v(x \cdot \nabla v) + \frac{\|x\|^2}{c^2}v^2 \right) w \dx.
	\end{align*}
	For the middle term, we use integration by parts:
	$$ \frac{2}{c} \int_{\R^d} v(x \cdot \nabla v) w \dx = \frac{1}{c} \int_{\R^d} x \cdot \nabla(v^2) w \dx = -\frac{1}{c} \int_{\R^d} v^2 \nabla \cdot (xw) \dx. $$
	Since $\nabla \cdot (xw) = (d + \frac{\|x\|^2}{c})w$, this becomes $-\int_{\R^d} (\frac{d}{c}v^2 + \frac{\|x\|^2}{c^2}v^2)w \dx$.
	Substituting this back, the terms with $\|x\|^2$ cancel out, and we obtain the desired result.
\end{proof}

\begin{lemma}\label{lem:main_inequality}Let $V$ and $c$ be set as in Condition (\ref{1.2}). Let $v \in V$. Let $f \in C([0,T],\mathbb{R}_{>0})$ and $g \in C([0,T],\mathbb{R}_{>0})$. Then the following holds:
\begin{align}
	\langle v, A(v)\rangle\leq\left(\frac{df(t)}{2}-\frac{g^2(t)}{4c}\right)\int v^2e^{\frac{1}{2c}\|x\|^2}dx-\frac{g^2(t)}{2}\int \|\nabla v\|^2e^{\frac{1}{2c}\|x\|^2}dx
\end{align}

\end{lemma}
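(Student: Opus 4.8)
\emph{Setting up the pairing and reducing to smooth functions.} The pairing $\langle v,A(v)\rangle$ is the $V^{*}$--$V$ duality, which under the Gelfand structure $V\subset H\cong H^{*}\subset V^{*}$ agrees with the $H$-inner product $\int_{\R^d} v\,A(v)\,w\dx$ for $v\in V$, where $w=e^{\frac{1}{2c}\|x\|^2}$ and $H=L^2(\R^d;w)$. Both sides of the claimed inequality are continuous quadratic forms in the $V$-norm, and $C_c^\infty(\R^d)$ is dense in $V$; hence the plan is to verify the estimate for $v\in C_c^\infty(\R^d)$, where every integration by parts is legitimate, and then pass to the limit by continuity. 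This is precisely the density mechanism already exploited in the preceding lemmas.

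\emph{The drift term.} I would split $A(v)=f(t)\,\nabla\cdot(xv)+\tfrac{g^2(t)}{2}\Delta v$ and treat the two pieces separately. For the first-order term, writing $v\,(x\cdot\nabla v)=\tfrac12\,x\cdot\nabla(v^2)$ and integrating by parts turns $\int v\,\nabla\cdot(xv)\,w\dx$ into an expression governed by $\nabla\cdot(xw)=(d+\tfrac{\|x\|^2}{c})w$. That no boundary term appears is exactly the content of Lemma \ref{lem:div_is_zero}, whose vanishing-divergence identity is the integrated-by-parts statement needed here. Concretely this produces
\begin{align*}
	\int v\,\nabla\cdot(xv)\,w\dx=\frac{d}{2}\int v^2 w\dx-\frac{1}{2c}\int\|x\|^2 v^2 w\dx,
\end{align*}
a harmless zeroth-order part together with a negatively weighted $\|x\|^2$-term.

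\emph{The second-order term and the role of Condition~(\ref{1.2}).} For the Laplacian piece I would use the Ornstein--Uhlenbeck structure of the weight: since $\nabla(vw)w^{-1}=\nabla v+\tfrac{x}{c}v$, the integral $\int v\,\Delta v\,w\dx$ reduces, through the integration-by-parts identity of Lemma \ref{lem:ibp_for_ou} and the expansion of Lemma \ref{lem:expand_grad_norm}, to the genuinely dissipative term $-\tfrac{g^2(t)}{2}\int\|\nabla v\|^2 w\dx$ plus a zeroth-order $\int v^2 w\dx$ contribution and a positively weighted $\|x\|^2$-term. Collecting both computations, the $\|x\|^2$-weighted contributions merge into a single term with coefficient $-\tfrac{1}{2c}\bigl(f(t)-\tfrac{g^2(t)}{2c}\bigr)$. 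Here Condition~(\ref{1.2}) does the decisive work: it forces this coefficient to be non-positive, so the entire $\|x\|^2$-weighted term may be discarded in passing to an upper bound. What survives is the coercive gradient term $-\tfrac{g^2(t)}{2}\int\|\nabla v\|^2 w\dx$ together with the collected zeroth-order term, whose coefficient is the one appearing in the statement.

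\emph{The main obstacle.} The principal difficulty is not the algebraic bookkeeping but the legitimacy of integrating by parts on the unbounded domain against the exponentially growing weight $w$, where boundary contributions at infinity are a priori present. This is exactly why $V$ is equipped with the extra norm contribution $\|\nabla(vw)w^{-1}\|^2$ and why the three preceding lemmas are phrased as identities valid throughout $V$ by density: they serve as the rigorous replacement for the formal boundary manipulations, and I would invoke them directly rather than attempt a cutoff or decay argument at infinity by hand.
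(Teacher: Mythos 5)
Your route is essentially the paper's own: the same three preparatory lemmas, the same integrations by parts, and the same use of Condition \eqref{1.2} to discard the $\|x\|^2$-weighted term. The only cosmetic difference is that the paper adds and subtracts $\frac{g^2(t)}{2c}\int v\,\nabla\cdot(xv)\,w\dx$ (writing $w=e^{\frac{1}{2c}\|x\|^2}$) so that the second-order part appears as the divergence-form operator $\nabla\cdot\bigl(\nabla(vw)w^{-1}\bigr)=\Delta v+\frac{1}{c}\nabla\cdot(xv)$, to which Lemma \ref{lem:ibp_for_ou} applies directly, whereas you split off the Laplacian and recombine afterwards; the algebra is identical.

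However, there is a genuine gap at exactly the step you leave unverified (``the collected zeroth-order term, whose coefficient is the one appearing in the statement''). Carrying out your own bookkeeping for $v\in C_c^\infty(\R^d)$: the drift gives
\begin{align*}
	f(t)\int v\,\nabla\cdot(xv)\,w\dx=f(t)\left(\frac{d}{2}\int v^2w\dx-\frac{1}{2c}\int\|x\|^2v^2w\dx\right),
\end{align*}
and the Laplacian gives
\begin{align*}
	\frac{g^2(t)}{2}\int v\,\Delta v\,w\dx=\frac{g^2(t)}{2}\left(-\int\|\nabla v\|^2w\dx+\frac{d}{2c}\int v^2w\dx+\frac{1}{2c^2}\int\|x\|^2v^2w\dx\right).
\end{align*}
The $\|x\|^2$-terms do merge into $-\frac{1}{2c}\bigl(f(t)-\frac{g^2(t)}{2c}\bigr)\int\|x\|^2v^2w\dx\le0$, as you say, but the surviving zeroth-order coefficient is $\frac{df(t)}{2}+\frac{dg^2(t)}{4c}$, not $\frac{df(t)}{2}-\frac{g^2(t)}{4c}$: the Laplacian contributes its $\int v^2w\dx$ term with a \emph{plus} sign, and no legitimate estimate lets you delete a positive multiple of $\int v^2w\dx$ from an upper bound. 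In fact the stated inequality is false in general: take $f\equiv g^2/(2c)$, admissible under \eqref{1.2}; then the $\|x\|^2$-term vanishes identically, the pairing equals $\bigl(\frac{df}{2}+\frac{dg^2}{4c}\bigr)\int v^2w\dx-\frac{g^2}{2}\int\|\nabla v\|^2w\dx$ exactly, and the claimed bound would force $\frac{(d+1)g^2}{4c}\int v^2w\dx\le0$. For fairness, the paper's own proof commits the identical error---its phrase ``removing the term $\frac{dg^2(t)}{2c}\int v^2e^{\frac{1}{2c}\|x\|^2}\dx$ using inequality'' drops a positive term in the wrong direction---so you have reproduced the paper's argument, defect included. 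The repair is to state the lemma with the constant $\frac{df(t)}{2}+\frac{dg^2(t)}{4c}$ (or anything larger); this weaker bound suffices for everything downstream, since the verifications of $(A_2)$--$(A_4)$ only require the existence of some finite constant $K$.
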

\begin{proof}
We begin by adding and subtracting the term $\frac{g^2(t)}{2c}\int v \nabla\cdot(xv)e^{\frac{1}{2c}\|x\|^2}\mathrm{d}x$ within $\langle v,A(v)\rangle$:
\begin{align}
	&\langle v, A(v)\rangle=\left(f(t)-\frac{g^2(t)}{2c}\right)\int v \nabla\cdot(xv)e^{\frac{1}{2c}\|x\|^2}\mathrm{d}x+\frac{g^2(t)}{2}\int v \left(\frac{1}{c}\nabla\cdot(xv)+\Delta v\right)e^{\frac{1}{2c}\|x\|^2}\mathrm{d}x
\end{align}
Applying the identity $\int v(x\cdot\nabla v)e^{\frac{1}{2c}\|x\|^2}\mathrm{d}x=\frac{1}{2}\int (x\cdot\nabla v^2)e^{\frac{1}{2c}\|x\|^2}\mathrm{d}x	=\frac{c}{2}\int \nabla v^2\cdot\nabla e^{\frac{1}{2c}\|x\|^2}\mathrm{d}x$ to right-handside of Equality (4.7), we obtain:
\begin{align}
	\begin{split}
	&=d\left(f(t)-\frac{g^2(t)}{2c}\right)\int v^2e^{\frac{1}{2c}\|x\|^2}\mathrm{d}x+\frac{c}{2}\left(f(t)-\frac{g^2(t)}{2c}\right)\int \nabla v^2\cdot\nabla e^{\frac{1}{2c}\|x\|^2}\mathrm{d}x\\
&\quad=+\frac{g^2(t)}{2}\int v \left(\frac{1}{c}\nabla\cdot(xv)+\Delta v\right)e^{\frac{1}{2c}\|x\|^2}\mathrm{d}x
	\end{split}
\end{align}
Using $v^2\Delta e^{\frac{1}{2c}\|x\|^2}$ as an additional term and a subtraction term in Expression (3.8), we get
\begin{align*}
	&=d\left(f(t)-\frac{g^2(t)}{2c}\right)\int v^2e^{\frac{1}{2c}\|x\|^2}\mathrm{d}x+\frac{c}{2}\left(f(t)-\frac{g^2(t)}{2c}\right)\int \nabla\cdot (v^2\nabla e^{\frac{1}{2c}\|x\|^2})-v^2\Delta e^{\frac{1}{2c}\|x\|^2}\mathrm{d}x\\
	&\quad+\frac{g^2(t)}{2}\int v \left(\frac{1}{c}\nabla\cdot(xv)+\Delta v\right)e^{\frac{1}{2c}\|x\|^2}\mathrm{d}x
\end{align*}
By Lemma \eqref{lem:div_is_zero} and the identity $\int v^2\Delta e^{\frac{1}{2c}\|x\|^2}\mathrm{d}x=\int v^2\frac{\|x\|^2+cd}{c^2} e^{\frac{1}{2c}\|x\|^2}\mathrm{d}x$, we obtain:
\begin{align*}
&=\left(f(t)-\frac{g^2(t)}{2c}\right)\int\left(\frac{d}{2}-\frac{\|x\|^2}{2c}\right) v^2e^{\frac{1}{2c}\|x\|^2}\mathrm{d}x+\frac{g^2(t)}{2}\int v \left(\frac{1}{c}\nabla\cdot(xv)+\Delta v\right)e^{\frac{1}{2c}\|x\|^2}\mathrm{d}x
\end{align*}
By Lemma \eqref{lem:ibp_for_ou}, we obtain:
\begin{align*}
&=\left(f(t)-\frac{g^2(t)}{2c}\right)\int\left(\frac{d}{2}-\frac{\|x\|^2}{2c}\right) v^2e^{\frac{1}{2c}\|x\|^2}\mathrm{d}x-\frac{g^2(t)}{2}\int \left\|\nabla ve^{\frac{1}{2c}\|x\|^2}\right\|^2e^{-\frac{1}{2c}\|x\|^2} \mathrm{d}x
\end{align*}
By Lemma \eqref{lem:expand_grad_norm} and removing the term $\frac{dg^2(t)}{2c}\int  v^2e^{\frac{1}{2c}\|x\|^2}dx$ using inequality, we obtain:
\begin{align*}
	&\leq\left(\frac{df(t)}{2}-\frac{g^2(t)}{4c}\right)\int v^2e^{\frac{1}{2c}\|x\|^2}dx-\frac{g^2(t)}{2}\int \|\nabla v\|^2e^{\frac{1}{2c}\|x\|^2}dx
\end{align*}
\end{proof}

We now proceed with the proof of the main theorem. This involves verifying four conditions adapted from [KrylovPaperCitation] to the present setting.
\\\\
$\left(A_{1}\right)$ Semicontinuity of $A$ : \\
	The operator $A(t,u)$ is affine in $u$, as $\mathcal{L}$ is a linear operator. Therefore, the map $s \mapsto {}_{V^*}\langle A(t, v_1 + s v_2), v \rangle_V$ is affine in $s$ and thus continuous, provided the pairing is well-defined. The well-posedness of the pairing for any $u, v \in V$ is guaranteed by the Cauchy-Schwarz inequality, as the weighted norms comprising the $V$-norm ensure the finiteness of all integral terms.

$\left(A_{2}\right)$ Monotonicity of $(A, B)$ : By Lemma \eqref{lem:main_inequality}, we obtain:
\begin{align*}
	2\langle v_{1}-v_{2},A\left(v_{1}\right)-A\left(v_{2}\right)\rangle&\leq	-\frac{cg^2(t)}{2}\int\|\nabla (v_1-v_2)\|^2e^{\frac{1}{2c}\|x\|^2}dx+df(t)\int \|v_1-v_2\|^2e^{\frac{1}{2c}\|x\|^2}dx\\
	&\leq df(t)\int \|v_1-v_2\|^2e^{\frac{1}{2c}\|x\|^2}dx.
\end{align*}
Let $M_f$ be the maximum of $f(t)$. If we set $K$ to be greater than or equal to $dM_f$, (A2) holds.\\
$\left(A_{3}\right)$ Coercivity of $(A, B)$ :
By Lemma \eqref{lem:main_inequality} and inserting norm expression of $V$, we obtain:
\begin{align*}
	&2 {}_{V^*}\langle v, A(v)\rangle_V+\alpha\|v\|_V^{2} \\
	&\leq(\alpha-g^2(t))\int\|\nabla v\|^2e^{\frac{1}{2c}\|x\|^2}dx+\left(\alpha+df(t)-\frac{g^2(t)}{2c}\right)\int \|v\|^2e^{\frac{1}{2c}\|x\|^2}dx\\
	&\quad+\alpha\int\|\nabla (ve^{\frac{1}{2c}\|x\|^2})\|^2e^{-\frac{1}{2c}\|x\|^2}dx
\end{align*}
By Lemma \eqref{lem:expand_grad_norm},
\begin{align*}
	&=(2\alpha-g^2(t))\int\|\nabla v\|^2e^{\frac{1}{2c}\|x\|^2}dx+\left(\alpha\left(1-\frac{d}{c}\right)+df(t)-\frac{g^2(t)}{2c}\right)\int \|v\|^2e^{\frac{1}{2c}\|x\|^2}dx	
\end{align*}
We set $\alpha$ to be a smaller positive number than $g^2(t)/2$. Then we obtain
\begin{align}
	&2 {}_{V^*}\langle v, A(v)\rangle_V+\alpha\|v\|_V^{2}\leq\left(\alpha\left(1-\frac{d}{c}\right)+df(t)-\frac{g^2(t)}{2c}\right)\int \|v\|^2e^{\frac{1}{2c}\|x\|^2}dx.
\end{align}
If we set $K$ to be greater than or equal to $\left(\alpha\left(1-d/c\right)+df(t)-g^2(t)/2c\right)$, then (A3) holds.\\
$\left(A_{4}\right)$ Boundedness of the growth of $A$ :
\begin{align*}
	\|A(v)\|_{V^*}=\sup_{ \|w\|_V\leq 1} \int w \left(f(t)\nabla\cdot(xv)+\frac{g^2(t)}{2}\Delta v\right)e^{\frac{1}{2c}\|x\|^2}dx.
\end{align*}
Given $\delta>0$, there exists $u$ such that satisfies the following equations:
\begin{align*}
	&=\sup_{ \|w\|_V\leq 1}\int w \left(f(t)\nabla\cdot(xv)+\frac{g^2(t)}{2}\Delta v\right)e^{\frac{1}{2c}\|x\|^2}dx\\
	&\leq\int u \left(f(t)\nabla\cdot(xv)+\frac{g^2(t)}{2}\Delta v\right)e^{\frac{1}{2c}\|x\|^2}dx+\delta.
\end{align*}
If we focus on the right-hand side of inequality (21) and inserting term $cf(t)\Delta v$ adding and subtracting to the above statement and using integration by parts and Cauchy Schwartz inequality, we can obtain:
\begin{align*}
	&\int u \left(f(t)\nabla\cdot(xv)+\frac{g^2(t)}{2}\Delta v\right)e^{\frac{1}{2c}\|x\|^2}dx+\delta\\
	&\leq cf(t)\int \big|\nabla (ue^{\frac{1}{2c}\|x\|^2})\cdot\nabla (ve^{\frac{1}{2c}\|x\|^2})e^{-\frac{1}{2c}\|x\|^2}\big| dx\\
	&\quad-\left(\frac{g^2(t)}{2}-cf(t)\right)\int \big|\nabla (ue^{\frac{1}{2c}\|x\|^2})\cdot\nabla v\big| dx+\delta.
\end{align*}
Applying Hölder’s inequality and Lemma \eqref{lem:expand_grad_norm}, we get:
\begin{align*}
	\|A(v)\|_{V^*}&\leq cf(t)\sqrt{\int\left(\|\nabla u\|^2-\frac{d}{c}u^2\right)e^{\frac{1}{2c}\|x\|^2}dx}\sqrt{\int\left(\|\nabla v\|^2-\frac{d}{c}v^2\right)e^{\frac{1}{2c}\|x\|^2}dx}\\
	&\quad-\left(\frac{g^2(t)}{2}-cf(t)\right)\sqrt{\int\left(\|\nabla u\|^2-\frac{d}{c}u^2\right)e^{\frac{1}{2c}\|x\|^2}dx}\sqrt{\int\|\nabla v\|^2e^{\frac{1}{2c}\|x\|^2}dx}+\delta.
\end{align*}
We define \(M_d\) as the maximum between 1 and \(d/c\). Then we obtain:
\begin{align*}
	&\|A(v)\|_{V^*}\leq cf(t)M_d^2\|u\|_V\|v\|_V-\left(\frac{g^2(t)}{2}-cf(t)\right)M_d\|u\|_V\|v\|_V+\delta\\
	&=\left( cf(t)M_d^2-\left(\frac{g^2(t)}{2}-cf(t)\right)M_d\right)\|u\|_V\|v\|_V+\delta.
\end{align*}
Since $u$ is chosen from $\|u\|_V\leq 1$ and $\delta$ can be chosen arbitrarily, we can see that the following inequality holds:
\begin{align*}
	\|A(v)\|_{V^*}&\leq\left( cf(t)M_d^2-\left(\frac{g^2(t)}{2}-cf(t)\right)M_d\right)\|v\|_V.
\end{align*}
If we set $K$ to be greater or equal to $ cf(t)M_d^2-\left(g^2(t)/2-cf(t)\right)M_d$, then $(A_4)$ holds.

Since we can choose \(K\) satisfying the above conditions, the proof is complete.

We can derive a similar theorem when $f(t)$ is the zero function on $[0,T]$ and $g \in C([0,T],\mathbb{R}_{>0})$. In this case, Equation (3.1) has a unique solution $\left\{u_t\right\}_{t \in [0, T]}$ in $L^2(\mathbb{R}^d)$. Moreover, this solution is a Markov process.

To understand why we need to treat the cases $f(t) > 0$ and $f(t) = 0$ separately, we can examine the proof through the lens of a scaling limit of time-independent SPDEs:	
\begin{align*}
	du=A(t_{i/N},u)dt+B(t_{i/N})dW \text{\quad for\quad} i=1,\cdots N-1.
\end{align*}
For each piece-wise SPDE, there exists a corresponding $\epsilon$ and $c_i$ such that the solution $u_i$ lies in $L^2(\mathbb{R}^d;e^{\frac{1}{2c_i}|x|^2})$ for some short time interval $[t_{i/N}-\epsilon,t_{i/N}+\epsilon]$. Consider the case where $f(t_j/N) = 0$ and $f(t_{j+1}/N) > 0$ for some $j \in {1,\ldots,N-1}$. By Theorem 1.1, the solution $u_{j/N}$ lies in $L^2(\mathbb{R}^d)$. If we assume $u_{j/N}$ does not lie in $L^2(\mathbb{R}^d;e^{\frac{1}{2c}|x|^2})$ for any $c > 0$, then as $N \to \infty$, $u_{(j+1)/N}$ cannot lie in $L^2(\mathbb{R}^d;e^{\frac{1}{2c}|x|^2})$, contradicting the case where $f(t) \neq 0$. This demonstrates why the cases must be treated separately.

\bibliographystyle{plain}
\bibliography{ConnectingSPDEToSGMs}

\begin{thebibliography}{1}

\bibitem{dalang1998stochastic}
Robert~C Dalang and Nicholas~E Frangos.
\newblock The stochastic wave equation in two spatial dimensions.
\newblock {\em Annals of Probability}, pages 187--212, 1998.

\bibitem{flandoli2010well}
Franco Flandoli, Massimiliano Gubinelli, and Enrico Priola.
\newblock Well-posedness of the transport equation by stochastic perturbation.
\newblock {\em Inventiones mathematicae}, 180(1):1--53, 2010.

\bibitem{krylov2007stochastic}
Nicolai~V Krylov and Boris~L Rozovskii.
\newblock Stochastic evolution equations.
\newblock In {\em Stochastic Differential Equations: Theory And Applications: A
  Volume in Honor of Professor Boris L Rozovskii}, pages 1--69. World
  Scientific, 2007.

\bibitem{liu2010spde}
Wei Liu and Michael R{\"o}ckner.
\newblock {SPDE} in {Hilbert} space with locally monotone coefficients.
\newblock {\em Journal of Functional Analysis}, 259(11):2902--2922, 2010.

\bibitem{DBLP:conf/iclr/0011SKKEP21}
Yang Song, Jascha Sohl{-}Dickstein, Diederik~P. Kingma, Abhishek Kumar, Stefano
  Ermon, and Ben Poole.
\newblock Score-based generative modeling through stochastic differential
  equations.
\newblock In {\em 9th International Conference on Learning Representations,
  {ICLR} 2021, Virtual Event, Austria, May 3-7, 2021}. OpenReview.net, 2021.

\bibitem{Swanson2007}
Jason Swanson.
\newblock Variations of the solution to a stochastic heat equation.
\newblock {\em Annals of Probability}, 35(6):2122--2159, 2007.

\end{thebibliography}

\end{document}